\documentclass[10pt,a4paper,reqno]{amsart} 


\usepackage{amsmath}
\usepackage{amssymb}
\usepackage{amsthm} 
\usepackage[initials,msc-links]{amsrefs}
\usepackage{url} 
\usepackage[dvipsnames]{xcolor} 
\usepackage[english=american]{csquotes}
\usepackage{enumerate} 
\usepackage{mathrsfs}
\usepackage{mathtools}  
\usepackage{bbm} 
\usepackage{hyperref}   
\hypersetup{
unicode=false,          
pdftoolbar=true,        
pdfmenubar=true,        
pdffitwindow=false,     
pdfstartview={FitH},    
pdftitle={My title},    
pdfauthor={Author},     
pdfsubject={Subject},   
pdfcreator={Creator},   
pdfproducer={Producer}, 
pdfkeywords={keyword1, key2, key3}, 
pdfnewwindow=true,      
colorlinks=true,       
linkcolor=blue ,          
citecolor=blue ,        
filecolor=magenta,      
urlcolor=Aquamarine           
}



\newtheorem{theorem}{Theorem}
\newtheorem{lemma}[theorem]{Lemma}
\newtheorem{proposition}[theorem]{Proposition}

\theoremstyle{definition}
\newtheorem{definition}[theorem]{Definition}
\newtheorem{remark}[theorem]{Remark}

\newcommand{\ud}{\;\mathrm{d}} 
 
\newenvironment{proofof}[1][Proof]{\noindent \textit{#1.} }{\ \qed}

\newcommand{\Ocal}{\mathcal{O}}

\DeclareMathOperator{\id}{id}

\newcommand{\set}[2]{\left\{\, #1 \  \textup{\textbf{:}}\  #2 \,\right\}}

\newcommand{\dpr}[1]{\langle #1 \rangle}

\newcommand{\dd}{\;\mathrm{d}}

\newcommand{\R}{\mathbb{R}}

\newcommand{\loc}{\mathrm{loc}}

\newcommand{\sbullet}{\begin{picture}(1,1)(-0.5,-2)\circle*{2}\end{picture}}
\newcommand{\frarg}{\,\sbullet\,}

\newcommand{\cf}{{\mathbbm 1}}

\newcommand{\vphi}{\varphi}

\newcommand{\Reals}{\mathbb R}



\makeatletter
\renewcommand*\env@matrix[1][*\c@MaxMatrixCols c]{%
  \hskip -\arraycolsep
  \let\@ifnextchar\new@ifnextchar
  \array{#1}}
\makeatother












\DeclareMathOperator{\Id}{id}

\mathtoolsset{showonlyrefs}

\title[Debye screening]{Debye screening {for} the stationary Vlasov-Poisson equation in interaction with a point charge}
 
\author[A.~Arroyo-Rabasa]{Adolfo Arroyo-Rabasa}
\address{A.A.-R.: Mathematics Institute, University of Warwick, Coventry CV4 7AL, UK.}
\email{\href{mailto:adolfo.arroyo-rabasa@warwick.ac.uk}{adolfo.arroyo-rabasa@warwick.ac.uk}}
\author[R.~Winter]{Raphael Winter}
\address{R.W.: Universit\'{e} de Lyon,
	43 Boulevard du 11 Novembre 1918, 69100 Villeurbanne, France}
\email{\href{mailto:raphael.winter@ens-lyon.fr}{raphael.winter@ens-lyon.fr}}

\date{\today}

\subjclass[2010]{82B05}
\keywords{Debye length, Debye screening, nonlinear Vlasov-Poisson equation, point-charge, stationary solution}

\begin{document}

	\begin{abstract} 
		We prove that the Debye screening length emerges in an infinitely extended plasma {with uniform background} described by the nonlinear Vlasov-Poisson equation, interacting with a point charge. While screened stationary states as well as the time-dependent problem are well-studied for the linearized equation, there are few rigorous results on screening in the nonlinear setting. As such, the results presented here  {cover} the stationary case under the assumptions predicted by the linearized theory. 
	\end{abstract}
\maketitle



\section{Introduction}

In this paper, we show  screening for the interaction of a point charge with a plasma described by the nonlinear Vlasov-Poisson equation. The so-called Debye screening is essential to both the physical and mathematical theory of plasmas: 
the principle states that, in spite of the scale invariance of the Coulomb interaction potential
\begin{align}\label{eq:Coulomb}
	\phi_c(x) = \frac{\kappa_0}{|x|}, \quad \kappa_0>0,
\end{align}
 the resulting  effective potential of a single particle has a well-defined length scale.
 More precisely, the effective potential is of Yukawa type:
 \begin{align} \label{eq:Yukawa}
 	\phi_{\operatorname{eff}}(x)= \frac{\kappa_1}{\left(|x|/L_D\right)} e^{- \frac{|x|}{L_D}},
 \end{align}
where $L_D$ is the Debye length of the plasma. 
Hence, in contrast to \eqref{eq:Coulomb}, the effective potential emerging from the dynamics is of short range. The screening principle has become a ubiquitous concept in plasma physics, and in the mathematical theory of plasmas it is  often incorporated directly into the model (e.g. \cites{bedrossian_landau_2018,han-kwan_quasineutral_2011}). For the grand-canonical distribution of  particles in space, Debye screening of correlations has been proved rigorously in \cite{brydges_debye_1980}.

Our goal here is to provide a rigorous proof for the Debye screening principle starting from a nonlinear Vlasov-Poisson equation. While the techniques employed here are relatively simple, it appears to be the first result of this type to the knowledge of the authors. 

 We consider an infinitely extended Vlasov-Poisson equation {with a uniform} background distribution of ions in interaction with a point charge at the origin 	\begin{equation}\label{eq:Vlasov}
 	\begin{aligned}
 	\partial_tf+ v  \cdot \nabla_x f - \theta_V \nabla_x Q \cdot \nabla_v f &=   0, \quad f(0,x,v)=f_0(v)\\
 		\rho[f](t,x) &= \int_{\Reals^3} f(t,x,v) \dd v \\
 	-\Delta_x Q(t,x) &=  (N\theta_V)(\rho[f]-1)  + \theta_P \delta_0.
 	\end{aligned}
 	\end{equation} 
 	Here{,}
 	\begin{itemize} 
 		\item $N$ is the number density of electrons per unit volume,
 		\item $\theta_V$ denotes the charge of those electrons, and
 		\item $\theta_P$ is the size of the point charge at the origin.
 	\end{itemize}	
Moreover, a {uniform} background distribution of ions is assumed, so the self-consistent potential is generated by $\varrho[f]-1$. We will restrict ourselves to repulsive plasma-point charge interaction in this paper, i.e., $\theta_P \theta_V>0$.
In physics, the system \eqref{eq:Vlasov} is used both for attractive and repulsive plasma-point charge interaction ---mainly plasma-electron and plasma-ion interaction (cf. Section 1.7 in \cite{goldston_introduction_1995}). Debye screening of plasma-electron interaction is covered by our result, and is also the case considered in \cite{brydges_debye_1980}. Physical applications of \eqref{eq:Vlasov} for repulsive interactions are for example the effective interaction of electrons in a plasma (cf. Section 8.2 in \cite{schram_kinetic_1991}), the study of fluctuations in plasmas (cf. \cite{rostoker_fluctuations_1961,rostoker_superposition_1964}), and the kinetic theory of plasmas (see \cite{piasecki_stochastic_1987}).
The model above is also widely used in the case of plasma-ion interaction. In this case, the interaction is typically attractive, and thus not covered by our result. An exemplary application is the analysis of the friction coefficient of an ion passing through a plasma (cf. \cite{boinefrankenheim_nonlinear_1996,peter_energy_1991}). We believe that, due to its physical relevance in plasma-ion interaction, this case deserves a thorough investigation in future works.

	We proceed with our analysis of \eqref{eq:Vlasov}.
 	Choosing an appropriate length scale, we may set~{$N\theta_V^2=1$}. Then{,} for a plasma with a well-defined temperature $T>0$, the Debye length  $\lambda_D$ is given by
 	\begin{align} \label{eq:Debye}
 	\lambda_D= \sqrt{\frac{T}{N\theta_V^2}}= \sqrt{T}.
 	\end{align}
 	We choose units such that the mass of an electron and the Boltzmann constant are one. We shall as well assume the normalization
 	\begin{align}
 	\int_{\Reals^3} f_0(v) \ud{v} =1.
 	\end{align}
 	If the distribution $f_0$ is at a thermodynamic equilibrium with temperature $T$, then $f_0$ is the Maxwellian $M_T(v)$ given by
 	\begin{align} \label{eq:Maxwellian}
 		M_T(v) = \frac{1}{(2\pi T)^\frac32} e^{-\frac{|v|^2}{2T}}.
 	\end{align}
 	As observed by linearization (cf. \cite{goldston_introduction_1995,lifshitz_course_1981}), Debye screening can be predicted by looking at the stationary states of \eqref{eq:Vlasov}.
	Let the distribution $f_0$ be continuous and radial, i.e.,
	\begin{align} \label{eq:radial}
		f_0(v)= F \big(\frac12 |v|^2\big),\quad  f_0 \in C^1(\Reals^3),
	\end{align}
and assume that $f_0$ is algebraically decaying. More precisely, we assume:
	\begin{align}\label{algebraic}
		|F(r)| + |F'(r)|\leq \frac{C}{1+r^2} \, ,\quad F\in C^1(0,\infty) .
	\end{align}
	Then, the solution to the linearized system (cf. Section 1.7 of \cite{goldston_introduction_1995}) satisfies the screening estimates:
	\begin{gather}
		0\leq Q(x)		\leq \frac{C  \theta  e^{-\frac{|x|}{\lambda_D}}}{|x|}\label{est:Q},\\
	|1-\rho[f]|	\leq C \theta e^{-\frac{|x|}{\lambda_D}},\label{est:rho}
	\end{gather}
	where $\theta=\theta_P {\cdot }\theta_V$.
	We remark that the  condition of radial symmetry \eqref{eq:radial} is necessary since  the linearized theory predicts algebraically decaying  stationary states for non-radial functions $f_0$.
	Furthermore, linearized stability of the {uniform} stationary state has been shown to hold under the Penrose condition (see \cites{glassey_time_1994,glassey_time_1995,lancellotti_glassey-schaeffer_2015}). Quantitative estimates for the linearized system with a point charge can be found in \cite{vogt_debye_2015}. In the radially symmetric case, the Penrose condition (see  \cite{penrose_electrostatic_1960}) can be written as 
	\begin{align} \label{eq:monotone}
		F'(r) < 0 \quad \forall r > 0.
	\end{align}
	Under the conditions \eqref{eq:radial},~\eqref{algebraic} and~\eqref{eq:monotone}, we show the existence of screened radial stationary states of the nonlinear Vlasov-Poisson equation \eqref{eq:Vlasov}.  In the time-dependent problem, we consider homogeneous stationary initial data $f(0,x,v)= f_0(v)$. For the stationary case, $f_0$ becomes the boundary condition at $|x|\rightarrow \infty$, i.e., we impose that the plasma is unperturbed at infinity.
	
	 The precise statement of our main theorem is the following:
	\begin{theorem} \label{Theorem}
		Let $f_0$ be a radial probability density as in~\eqref{eq:radial}. Assume that $F$ satisfies conditions \eqref{algebraic} and \eqref{eq:monotone}.
		Then, for every positive $\theta>0$ there exists a radial weak solution $f$ (cf. Definition~\ref{def:weak})  to
		\begin{equation} \label{eq:main}
			\begin{aligned}
			v \cdot \nabla_x f - \nabla_x  Q \cdot \nabla_v f &=   0\\
			-\Delta_x Q &=  (\rho[f]-1)  + \theta \delta_0 \, ,
			\end{aligned} 
		\end{equation}
		where 				\[
		\rho[f](x) =\rho (x)= \int_{\Reals^3} f(x,v) \dd v,
		\]
		and $f$ is subject to the boundary condition
		\[
			\lim_{|x|\rightarrow \infty} f(x,v)=f_0(v).
		\]

		
		Moreover, the effective potential $Q$ of the solution and the spatial density $\rho[f]$ satisfy the exponential bounds \eqref{est:Q} and \eqref{est:rho}. 
		
		Furthermore, there is an explicit formula for the characteristic length scale of decay (see \eqref{eq:sigma}, \eqref{est:QLemma}). Whenever there is a well-defined Debye-length ($f_0$ Maxwellian with temperature $T$), the length scale is indeed given by $\lambda_D$ as defined in \eqref{eq:Debye}.
	\end{theorem}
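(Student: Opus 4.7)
For any radial $Q\colon\Reals^3\to\Reals$ the total energy $H(x,v)=\tfrac12|v|^2+Q(x)$ is conserved along the Vlasov characteristics, so the ansatz
\[
f(x,v)=F\bigl(\tfrac12|v|^2+Q(x)\bigr)
\]
automatically provides a radial weak stationary solution of the Vlasov equation, and the boundary condition $f\to f_0$ at infinity reduces to $Q(x)\to 0$ as $|x|\to\infty$. Inserting this into the Poisson equation collapses the whole system to the scalar semilinear problem
\[
-\Delta Q + h(Q) = \theta\,\delta_0,\qquad h(s):=1-\int_{\Reals^3}F\bigl(\tfrac12|v|^2+s\bigr)\,dv,
\]
on $\Reals^3$, with $Q\to 0$ at infinity. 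Since $\rho[f]-1=-h(Q)$, the bound \eqref{est:rho} will be an immediate consequence of \eqref{est:Q} and the Lipschitz character of $h$ near zero. The normalization $\int f_0\,dv=1$ gives $h(0)=0$; the Penrose condition \eqref{eq:monotone} yields $h'(s)=-\int F'(\tfrac12|v|^2+s)\,dv>0$ on $[0,\infty)$, so $h$ is strictly increasing; and the algebraic decay \eqref{algebraic} gives $h$ bounded with $h'(0)$ finite, equal to $1/\lambda_D^2$ in the Maxwellian case, which is precisely the constant appearing in the announced rate.

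I would establish existence by a sub/super-solution argument applied to a regularized problem. A trivial sub-solution is $\underline Q\equiv 0$. A natural super-solution is the Coulomb potential $\overline Q(x)=\theta/(4\pi|x|)$, which satisfies $-\Delta\overline Q=\theta\delta_0$, so $-\Delta\overline Q+h(\overline Q)\geq\theta\delta_0$ because $h\geq 0$. Replacing $\delta_0$ by a radial mollifier $\delta^\varepsilon$ and restricting to $B_R$ with homogeneous Dirichlet data, monotone iteration—or equivalently minimization of the convex functional $\int\tfrac12|\nabla Q|^2+\Phi(Q)-\delta^\varepsilon Q$ with $\Phi'=h$—produces radial approximants $0\leq Q^{\varepsilon,R}\leq\theta/(4\pi|x|)$. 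The Coulomb ceiling and standard elliptic regularity away from the origin give uniform local $C^{2,\alpha}$ bounds, and I would pass to the limit $R\to\infty$ and $\varepsilon\to 0$ along a subsequence to obtain a radial weak solution $Q\in C^2(\Reals^3\setminus\{0\})$ of the full equation with $0\leq Q\leq\theta/(4\pi|x|)$.

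The exponential tail is then obtained by a two-scale comparison argument on the exterior region where the nonlinearity may be linearized. Fix $\eta>0$: by the Coulomb bound there exists $R_\eta>0$ such that $Q(x)\leq\eta$ for $|x|\geq R_\eta$, and on that region $h(Q)\geq(h'(0)-\omega(\eta))Q$ with $\omega(\eta)\to 0$ as $\eta\to 0$. Setting $\sigma_\eta:=\sqrt{h'(0)-\omega(\eta)}$ and choosing $A_\eta$ large enough that the Yukawa profile $\Psi(x):=A_\eta e^{-\sigma_\eta|x|}/|x|$ dominates $Q$ on the sphere $\{|x|=R_\eta\}$, one has $-\Delta\Psi+\sigma_\eta^2\Psi=0$ on $\{|x|>R_\eta\}$ while $-\Delta Q+\sigma_\eta^2 Q\leq 0$ there, so the comparison principle (applied on a truncated annulus and then sending the outer radius to infinity using $Q,\Psi\to 0$) yields $Q\leq\Psi$ on $\{|x|\geq R_\eta\}$. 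Letting $\eta\to 0$ delivers \eqref{est:Q} with the sharp rate $\sqrt{h'(0)}$, which is $1/\lambda_D$ for the Maxwellian, and then \eqref{est:rho} follows from $|1-\rho[f]|=h(Q)\leq h'(0)Q$ combined with \eqref{est:Q}.

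The principal obstacle is the simultaneous presence of the Coulomb singularity at the origin and the demand for the \emph{sharp} exponential rate at infinity: a single global Yukawa super-solution cannot work, because $h(Q)/Q\to 0$ as $Q\to\infty$, so any super-solution that matches the rate $\sqrt{h'(0)}$ at infinity must fail in a neighbourhood of the origin. The two-scale strategy above deliberately separates these regimes, but carrying it out rigorously requires justifying monotone iteration through the distributional Dirac forcing and securing enough boundary regularity of the limiting $Q$ on the sphere $\{|x|=R_\eta\}$ to invoke the maximum principle; I expect this matching step to absorb most of the technical work, while the Hamiltonian reduction and the linearization near $Q=0$ are essentially routine.
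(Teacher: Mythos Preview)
Your reduction to the scalar problem $-\Delta Q+h(Q)=\theta\delta_0$ with $h=1-g$ is exactly the paper's starting point. The two approaches then diverge.

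For existence, the paper rewrites the equation as the fixed-point problem $Q=(\sigma\Id-\Delta)^{-1}(B_\sigma[Q]+\theta\delta_0)$ with $\sigma=h'(0)$ and applies Schaefer's theorem, relying on the compact radial embedding $H^1_{\mathrm{rad}}(\R^3)\hookrightarrow L^q(\R^3)$ of Strauss--Lions for compactness and on a Riesz-potential comparison $R_\lambda\le\sigma\theta\,\phi\ast\Phi_\sigma$ for the a~priori bound. Your ordered sub/super-solution pair $(0,\,\theta/4\pi|x|)$ together with monotone iteration on exhausting balls is a legitimate and more elementary alternative that avoids whole-space functional-analytic compactness altogether; either route delivers a radial $Q$ trapped between $0$ and the Coulomb potential.

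For the decay, however, there is a real (though repairable) gap. Your exterior Yukawa barrier gives $Q(x)\le A_\eta\,e^{-\sigma_\eta|x|}/|x|$ on $\{|x|\ge R_\eta\}$ with $\sigma_\eta\uparrow\sqrt{h'(0)}$, but the closing line ``letting $\eta\to0$ delivers the sharp rate'' does not follow: as $\eta\downarrow0$ the matching radius $R_\eta\to\infty$ (you only have the Coulomb bound so far) and the amplitude $A_\eta\gtrsim e^{\sigma_\eta R_\eta}$ blows up. The linear comparison therefore yields decay at every rate \emph{strictly below} $\sqrt{h'(0)}$, with a constant that degenerates at the endpoint. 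The paper attains the sharp rate by a different mechanism: it writes $Q=\theta\Phi_\sigma+R$ and iterates the convolution identity $R=\Phi_\sigma\ast B_\sigma[Q]$ using the \emph{quadratic} estimate $|B_\sigma[Q]|\le C|Q|^2/(1+|Q|)$, which is exactly the information $h(Q)-h'(0)Q=O(Q^2)$ that your linearization discards. One pass yields rate $\tfrac34\sqrt\sigma$, a second the exact $\sqrt\sigma$. To close your argument you would need to feed one sub-sharp exponential bound into such a quadratic bootstrap; the barrier step alone cannot see the endpoint rate.
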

	\begin{remark}[Uniqueness]
		We remark that we do not assume smallness of the point charge, i.e.{,} $0 < \theta \ll 1$. On the other hand, the theorem above does not prove uniqueness of the screened stationary state. 
		Multiple solutions (possibly non-radial) might exist, particularly for large perturbations $\theta \gg 1$. 
	\end{remark}
	The time-dependent Vlasov-Poisson equation of an infinitely  extended plasma in interaction with a point charge has been studied in a number of papers \cites{caprino_time_2015,chen_asymptotic_2015,chen_global_2015,crippa_lagrangian_2018,desvillettes_polynomial_2015,li_3-d_2017,marchioro_cauchy_2011,caprino_plasma-charge_2010}. 
	There are two features of the system which make it particularly hard to obtain mathematically rigorous results. Firstly, the system has infinite mass and infinite energy, so they cannot be used as conserved quantities.  Secondly, the macroscopic charge contained in the point particle never disperses, so there is no damping in time for the self-consistent field around it. Therefore, the classical theory of the Vlasov-Poisson equation (see, e.g., \cite{bardos_global_1985}) does not apply.

	In the works \cites{caprino_time_2015,chen_asymptotic_2015,chen_global_2015,crippa_lagrangian_2018,desvillettes_polynomial_2015,li_3-d_2017,marchioro_cauchy_2011,caprino_plasma-charge_2010} mentioned above, global well-posedness of the system {is established} under the assumption that velocities are  compactly supported and that the plasma density goes to zero close to the point charge. This results in finite speed of propagation within the system. Both conditions can be shown to be preserved by the system over time, and the authors obtain a polynomial bound on the growth of moments.
	It is likely that Debye screening is a key ingredient to generalizing the results on the time-dependent problem to more general initial data and obtaining information on the asymptotics for $t\rightarrow \infty$. Developing new techniques to address the intricate interplay of the self-consistent potential and the induced characteristics is  a challenging objective for future research.

	
		The stationary case is more feasible in the sense that it bypasses/circumvents some of the rigorous mathematical impediments discussed above. The key observation is that, once the system has reached a stationary state, the characteristics are fixed. From this point on, our approach is  reminiscent of the technique used for solitons in \cite{strauss_existence_1977}.

	\section{Proof of Theorem~\ref{Theorem}}
\begin{definition}[Weak solution] \label{def:weak}
	Let $f\in C(\Reals^3\times \Reals^3) \cap L^1_\loc(\R^3;L^1(\R^3))$ be a continuous function which satisfies  $f(x,\frarg) \in L^1(\R^3)$ for all points $x \in \R^3$, and  
	\[
		\int_{K} \int_{\Reals^3}|f(x,v)| \ud{x} \ud{v} < \infty
	\]
	for every compact set $K\subset \Reals^3$.
	
	Let $\theta > 0$ be a positive real number and let $\rho$ be the spatial density of $f$ given by 
	\begin{align}
		\rho(x)= \int_{\Reals^3}f(x,v) \ud{v}.
	\end{align}
		We say $f$ is a weak solution of~\eqref{eq:main} if and only if
	\begin{enumerate}[1)]
		\item $f$ agrees with $f_0$ at infinity:
	\begin{align} \label{def:boundary}
		\lim_{|x|\rightarrow \infty } f(x,v)= f_0(v) \quad \text{for all $v\in \Reals^3$};
	\end{align}
	\item there exists $Q\in W^{1,1}_{\loc}(\Reals^3)$ such that
	\begin{align*}  
	-\int_{\Reals^3 }\Delta_x \psi(x) Q(x) \ud{x}=  \int_{\Reals^3} (\rho-1)\psi \dd x  + \theta \psi(0)
	\end{align*}
	for all $\psi \in C_c^\infty(\R^3)$;
	\item and, both $f$ and $Q$ verify the integral equation  
	\begin{align*} 
	\int_{\R^3} \int_{\R^3} f (v \cdot \nabla_x \vphi) \dd v \dd x =  \int_{\R^3} \int_{\R^3}  f (\nabla_x Q \cdot \nabla_v \vphi) \dd v \dd x
		\end{align*}
	among all test functions $\vphi \in C^\infty_c(\R^3 \times \R^3)$. 
	
		\end{enumerate}
\end{definition}

\subsection*{Formal derivation} We commence by deriving a \emph{sufficient} condition to provide a solution to the system~\eqref{eq:Vlasov}. Firstly, we rest on the formal  principle that the Hamiltonian 
\begin{align}\label{eq:H}
	H(x,v):= Q(x)+ \frac12 |v|^2
\end{align}
is constant along the characteristics of the system \eqref{eq:Vlasov}. This motivates us to look for a solution of the form:
\begin{align} \label{eq:structural}
	f(x,v) = F(H(x,v)).
\end{align}
{Notice that the \emph{necessity} for this structural condition is only present provided that the level sets are precisely the orbits of the characteristic flow}.
Now, due to the boundary condition $f \equiv f_0$ at infinity, we conclude that $F$ and $f_0$ are related through the following expression:
\begin{align}
	\lim_{|x|\rightarrow \infty} f(x,v) &= \lim_{|x|\rightarrow \infty}  F(H(x,v))\\
	&=F(\frac12 |v|^2) = f_0(v)  \quad \text{for all $v \in \R^3$.} \label{eq:Ff0}
\end{align}
\begin{remark}
	In the case of attractive interaction between point charge and plasma, we cannot construct a solution using only the structural assumption \eqref{eq:structural} and the boundary condition. In this case, $Q$ could be negative around the point-charge. This gives rise to the difficulty that $F(r)$ is only determined by \eqref{eq:Ff0} for non-negative values $r\geq 0$, but might have a negative argument in \eqref{eq:structural}. This difficulty stems from orbits which are confined to a neighborhood of the point charge, and are therefore disconnected from the imposed boundary condition \eqref{def:boundary}.
\end{remark}
Consider the function 
\begin{align}\label{def:g}
	g(y):= \int_{\Reals^3} F(y+\frac12 |v|^2) \ud{v}, \quad y \ge 0.
\end{align}
Then, if $f$ is as in~\eqref{eq:structural}, the spatial density 
can be expressed as
\begin{align}	
	\rho[f](x) &= \int_{\Reals^3} F( Q(x) + \frac12 |v|^2) \ud{v} =  g(Q(x)).
\end{align}
Re-inserting the formula for $\rho$ above back into the definition of $Q$ (c.f. Definition~\ref{def:weak}), we obtain that $Q$ must satisfy the  equation
\begin{align} \label{eq:fixed}
-\int_{\R^3} Q \Delta \psi \dd x &= \int_{\R^3} (g(Q(x))-1) \psi \dd x + \theta \psi(0),
\end{align}
for all $\psi \in C_c^\infty(\R^3)$. {We shall henceforth focus in showing the existence of radial solutions to equation \eqref{eq:fixed}.} Following the convexity ideas in~\cites{strauss_existence_1977}, we split the term in the right-hand side above by extracting a mass $\sigma Q$ and adding the  operator defined by 
\begin{align} \label{def:B}
B_\sigma[Q] := g(Q_+)-1 + \sigma Q_+,
\end{align}
where $Q_+ = Q\cdot \cf_{Q\geq 0}$.
(We will chose the constant $\sigma>0$ below.)
With this definition, equation \eqref{eq:fixed} can be re-written as the following fixed-point problem
\begin{align}\label{eq:fixedfinal}
Q = (\sigma \Id- \Delta)^{-1}(B_\sigma[Q]+\theta \delta_0).
\end{align}



%

To show well-posedness of this equation, we make the following observation on the boundedness of the (non-linear) operator $B_\sigma$.
\begin{lemma}\label{lem:lem}
	Let $F$ satisfy the assumptions \eqref{algebraic} and \eqref{eq:monotone}.
	Recall the function $g$ defined in \eqref{def:g}.
	Then, $g$ is  strictly convex and satisfies 
	\begin{equation}\label{eq:sandwich}
		0 \le g(y) \le 1 = g(0) \quad  \qquad \text{for all $y \ge 0$.}
	\end{equation} 
	For $\sigma > 0$ chosen as 
	\begin{align} \label{eq:sigma}
		-\sigma:= g'(0) \coloneqq \lim_{r \to 0^+} g'(r),
	\end{align}
	the mapping $B_\sigma$ (cf. \eqref{def:B}) satisfies
	\begin{align}
		|B_\sigma[Q]| &\leq C \frac{|Q|^2}{1  + |Q|}  \quad \text{for all $Q\in L^1_{\mathrm{loc}}$}. \label{est:BQ}
	\end{align}
	 Moreover, $B_{\sigma}$ is continuous as an operator
	\begin{align}
		B_\sigma  : L^q \rightarrow L^{p}
	\end{align} 
	 for all $1\leq p\leq q\leq 2p$. Furthermore, we have the estimates
	\begin{align}
			\|B_\sigma [Q]\|_{L^p} &\leq C \|Q\|^\frac{q}p_{L^q},\label{eq:Bproperty}
	\end{align}
	and the range of $B_\sigma $ consists of non-negative functions:
	\begin{align}\label{eq:Bnonneg}
		B_\sigma [Q]\geq 0.
	\end{align}
\end{lemma}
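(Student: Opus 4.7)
My overall strategy is to first derive a clean one-dimensional representation of $g$ by integrating out the angular variables, then use a single integration by parts to express $g'$ in a form that can be differentiated once more using only the hypothesis $F\in C^1$. Once convexity, monotonicity and the bound $g\le 1$ are in hand, the estimates on $B_\sigma$ will follow from a Taylor expansion at the origin combined with a coarse bound for large arguments, and the $L^p$ statements will reduce to a routine pointwise interpolation.

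\smallskip

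Passing to spherical coordinates and substituting $s = \tfrac12|v|^2$, I would first rewrite
\[
g(y) = 4\pi\sqrt{2}\int_0^\infty F(y+s)\, s^{1/2}\,ds, \qquad y\ge 0,
\]
which is well defined by \eqref{algebraic}. Differentiation under the integral is licit, and an integration by parts then moves the derivative off $F$:
\[
g'(y) = -2\pi\sqrt{2}\int_0^\infty F(y+s)\,s^{-1/2}\,ds,
\]
with vanishing boundary contributions (the factor $s^{1/2}$ kills the endpoint $s=0$, while \eqref{algebraic} handles $s=\infty$). This is the key step: it permits a further differentiation requiring only $F'\in C^0$,
\[
g''(y) = -2\pi\sqrt{2}\int_0^\infty F'(y+s)\,s^{-1/2}\,ds,
\]
which is strictly positive by the Penrose-type monotonicity \eqref{eq:monotone} and uniformly bounded in $y\ge 0$ by \eqref{algebraic}. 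Hence $g$ is strictly convex; combining $g'<0$ with $g(0)=\int_{\R^3} f_0 =1$ and $F\ge 0$ yields the sandwich \eqref{eq:sandwich}, as well as the finiteness and positivity of $\sigma := -g'(0)$.

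\smallskip

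With $g''$ bounded on $[0,\infty)$ and the vanishing first-order data $B_\sigma(0)=g(0)-1=0$ and $B_\sigma'(0)=g'(0)+\sigma=0$, Taylor's theorem gives $|B_\sigma(y)|\le Cy^2$ for $y\in[0,1]$, while the crude estimate $|B_\sigma(y)|\le |g(y)-1|+\sigma y\le 1+\sigma y$ yields $|B_\sigma(y)|\le Cy$ for $y\ge 1$. These two regimes combine into \eqref{est:BQ}. Non-negativity \eqref{eq:Bnonneg} is then the geometric restatement of convexity: $g$ lies above its tangent at $0$, so $g(y)\ge 1-\sigma y$ and hence $B_\sigma(y)\ge 0$ for $y\ge 0$, whereas $B_\sigma[Q]\equiv 0$ wherever $Q\le 0$. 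From \eqref{est:BQ}, a case split on $|Q|\lessgtr 1$ shows that $\tfrac{|Q|^2}{1+|Q|}\le |Q|^\alpha$ for every $\alpha\in[1,2]$; choosing $\alpha = q/p$, which lies in $[1,2]$ precisely when $p\le q\le 2p$, yields the pointwise bound $|B_\sigma[Q]|\le C|Q|^{q/p}$, and \eqref{eq:Bproperty} follows by integration. Continuity of the associated Nemytskii operator is then standard: for $Q_n\to Q$ in $L^q$ one extracts an a.e.\ convergent subsequence, dominates $|B_\sigma[Q_n]-B_\sigma[Q]|^p$ by $C(|Q_n|^q+|Q|^q)$ (which is uniformly integrable because $|Q_n|^q\to|Q|^q$ in $L^1$), and invokes Vitali's theorem. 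The only genuine obstacle in the whole argument is the access to $g''$: since $F$ is only $C^1$, one cannot differentiate twice under the original integral, and the integration-by-parts identity for $g'$ is the trick that unlocks convexity and, with it, every subsequent estimate.
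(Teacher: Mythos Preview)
Your argument is correct and matches the paper's proof almost line for line on the structural core: the integral representation of $g$, the integration-by-parts trick to access $g''$ using only $F\in C^1$, strict convexity, the sandwich \eqref{eq:sandwich}, non-negativity of $B_\sigma$, and the pointwise bound \eqref{est:BQ}. The only visible difference is in how you establish continuity of $B_\sigma:L^q\to L^p$: the paper first proves a pointwise difference estimate $|B_\sigma[Q]-B_\sigma[P]|\le C\tfrac{|P|+|Q|}{1+|P|+|Q|}\,|P-Q|$ and then applies H\"older directly, which yields an explicit modulus of continuity, whereas you run the standard Nemytskii/Vitali argument via subsequences and uniform integrability. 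Both are valid here, and the paper's quantitative difference bound is not actually used elsewhere, so nothing is lost by your route.
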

\begin{remark}[Weak Penrose condition]
	The condition \eqref{eq:monotone} can be relaxed to directly imposing the inequalities \eqref{eq:gP} and \eqref{eq:gPP} obtained below. 
\end{remark}
\begin{proof}
	Changing variables we re-write $g$ as
	\begin{align}
		g(y) = 4\pi \int_0^\infty  \sqrt{2r} F(y+r) \ud{r}.
	\end{align}
	Using \eqref{algebraic} and \eqref{eq:monotone} this implies:
	\begin{align}
		g'(y) &= -4\pi \int_0^\infty  {\frac 1{\sqrt{2r}}} F(y + r) \ud{r}<0 \label{eq:gP}\\
		g''(y) &= -4\pi \int_0^\infty  \frac{1}{\sqrt{2r}} F'(y+r) \ud{r}>0.\label{eq:gPP}
	\end{align}
 By construction $g$ is non-negative and $g(0) = 1$,  hence~\eqref{eq:sandwich} follows from~\eqref{eq:gP}. Inequality~\eqref{eq:gPP} implies that $g$ is strictly convex with $\|g''\|_\infty < \infty$.
We define 
$$-\sigma \coloneqq g'(0) \coloneqq \lim_{r \to 0^+} g'(r).$$  Using~\eqref{eq:gP}-\eqref{eq:gPP}, we obtain the bound $\|g'\|_\infty \le \sigma < \infty$, so that $g$ is  globally Lipschitz. Another relevant consequence of the convexity of $g$ is that its first-order approximation at zero $1 - \sigma r$ is also a (one-sided) sub-differential of $g$, that is,
\[
b(r) \coloneqq g(r) - 1 + \sigma r \ge 0 \qquad (r \ge 0).
\]
In particular, by the definition of $B_\sigma$ (see~\eqref{def:B}), we verify that $B_\sigma[Q]\ge 0$ for all $Q \in L^q$.

Furthermore, on $[0,\infty)$ the function $b$ is globally Lipschitz  since $g$ is globally Lipschitz. Notice that for small values $t,s > 0$, a better estimate of the variation is given by
\begin{align*}
	|b(t) - b(s)| & = |\int_s^t b'(\omega) \dd \omega| \\
	& \leq  \int_s^t \int_0^\omega |b''(\rho)| \dd \rho \ud{\omega} \le \|g''\|_\infty \max\{t,s\}|t-s|.
\end{align*}
This calculation and the fact that $b$ is Lipschitz convey the bound 
\[
|b(t) - b(s)| \le \Big((\|g''\|_\infty  \max\{t,s\})  \wedge \mathrm{Lip}(b) \Big) |t -s|.
\]
For $s,t \in [0,1 + \mathrm{Lip}(b)\|g''\|_\infty^{-1}]$ we verify that 
\[
\max\{t,s\} \le (3 + 2 \mathrm{Lip}(b)\|g''\|_\infty^{-1})\frac{s + t}{1 + s + t}.
\]
On the other hand, if $s \ge 1$ or $t \ge 1$, then 
\[
\frac{1}{2} \le  \frac{s + t}{1 + s + t}.
\]
Thus, setting $C \coloneqq \max\{2\mathrm{Lip}(b),\|g''\|_\infty (3 + 2\mathrm{Lip}(b)\|g''\|_\infty^{-1})\}$ we conclude that
\[
|b(t) - b(s)|\le C \bigg( \frac{s + t}{1 + s + t}\bigg) |t - s| \qquad (t,s \ge 0).
\]

In particular, by the definition of $B_\sigma$, we may make use of the estimate
\begin{align}
	|B_\sigma[Q]-B_\sigma[P]| &\leq C \bigg(\frac{P_+ + Q_+}{1 + P_+ + Q_+}\bigg) |P_+-Q_+| \quad  \\
							&\leq C \bigg(\frac{|P|+|Q|}{1 + |P|+|Q|}\bigg) |P-Q| \quad \text{for all $P,Q \in L^1_{\mathrm{loc}}$}\label{est:Bprime}.
\end{align}
Setting $P=0$ yields \eqref{est:BQ}.
This gives $B_\sigma[Q] \in L^p$ for all $Q \in L^q$ and  $1 \le p \le q \le 2p$. Now, let us fix $P,Q \in L^q$ and set 
\[
U \coloneqq \frac{|P| + |Q|}{1 + |P| + |Q|},
\]
which belongs to $L^r$ for all $q \le r \le \infty$.

Observe that $s \coloneqq q/p \ge 1$ has the  dual exponent $s' = q/(q-p)$. The assumption $p\leq q\leq 2p$ implies that $q \leq ps'$. Hence, using that $|U|\leq 1$, this conveys the bound
\begin{align}
	|U|^{ps'} \leq 	|U|^q.
\end{align}
Thus, H{\"o}lder's inequality yields the estimate 
\begin{align*}
	\int |B_\sigma[P]-B_\sigma[Q]|^p & \le C \int |P-Q|^p U^p \\
	& \le C \bigg(\int |P-Q|^q \bigg)^{\frac pq} \bigg(\int U^q\bigg)^\frac{q-p}{q} \\
	& \le C \|P-Q\|_{L^q}^p \big(\|P\|_{L^q}^{q-p} + \|Q\|_{L^q}^{q-p}\big).
\end{align*}
This shows that $B_\sigma  : L^q \to L^p$ is a continuous operator for all $1 \le p \le q \le 2p$. Moreover, setting $P \equiv 0$ in the estimate above we conclude that 
\[
\|B_\sigma[Q]\|_{L^p} \le C \|Q\|_{L^q}^{\frac{q}{p}}.
\]
This finishes the proof.
\end{proof}
Further we use that the fundamental solution to the operator $(\sigma \Id- \Delta)^{-1}$ decays exponentially at infinity.
\begin{proposition} \label{prop:fundamental}
	Let $\sigma > 0$ and let $\Phi_\sigma$ be the fundamental solution defined by 
	\begin{align}
		\Phi_\sigma  \coloneqq (\sigma \Id- \Delta)^{-1} \delta_0.
	\end{align}
	Then $\Phi_\sigma \in C^\infty(\R^3 \setminus \{0\})$ is explicitly given by
	\begin{align}\label{est:zeta}
		\Phi_\sigma (x) = \frac{e^{-\sqrt{\sigma}|x|}}{4\pi|x|},
	\end{align}
and $\Phi_\sigma  \in  W^{1,q}(\R^3)\cap L^p(\Reals^3)$ for all $q \in [1,\frac32)$ and $p\in [1,3)$.

	Furthermore, the operator $(\sigma \Id- \Delta)^{-1}$ satisfies the estimate
	\begin{align} \label{est:inverse}
		\|(\sigma \Id- \Delta)^{-1} u\|_{H^1} \leq C_\sigma  \|u\|_{L^2}.
	\end{align}
\end{proposition}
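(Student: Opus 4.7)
The plan is to treat the three assertions in order: derive the closed form of $\Phi_\sigma$, read off its integrability and that of its gradient, and finally obtain the resolvent bound via a Fourier multiplier argument.

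For the explicit formula, I would look for a radial distributional solution $\Phi_\sigma(x) = u(|x|)$ of $(\sigma\Id - \Delta)\Phi_\sigma = \delta_0$. On $\R^3\setminus\{0\}$ the equation reduces, via the substitution $u(r) = v(r)/r$, to $-v''(r)/r + \sigma v(r)/r = 0$ so that $v(r) = Ae^{-\sqrt\sigma\, r} + Be^{\sqrt\sigma\, r}$. Integrability at infinity forces $B=0$, and testing against a radial bump $\psi$ supported near $0$ pins the constant: integration by parts together with the distributional identity $-\Delta(1/(4\pi|x|)) = \delta_0$ gives $A = 1/(4\pi)$. Smoothness of $\Phi_\sigma$ away from the origin is then immediate since $1/|x|$ and $e^{-\sqrt\sigma|x|}$ are both smooth there. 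One then directly verifies $(\sigma\Id -\Delta)\Phi_\sigma = \delta_0$ as a distribution: the smooth part away from $0$ vanishes by construction, and the delta is produced by the $1/|x|$ singularity exactly as in the Newtonian case.

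The integrability assertions follow from splitting into a ball around the origin and its complement. On $B_1(0)$, $\Phi_\sigma(x) \lesssim 1/|x|$, which lies in $L^p(B_1)$ precisely when $p<3$, whereas $|\nabla\Phi_\sigma(x)|\lesssim 1/|x|^2$ there, hence belongs to $L^q(B_1)$ iff $q<3/2$. On $\R^3 \setminus B_1(0)$ both $\Phi_\sigma$ and $\nabla\Phi_\sigma$ decay like $e^{-\sqrt\sigma|x|}$ (up to polynomial factors), hence lie in every $L^p$ for $p\in[1,\infty]$. Combining the two regions gives the stated range $\Phi_\sigma \in L^p \cap W^{1,q}$ for $p\in[1,3)$, $q\in[1,3/2)$.

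For the resolvent estimate I would argue on the Fourier side. Given $u\in L^2(\R^3)$, the unique tempered distributional solution of $(\sigma\Id-\Delta)v = u$ is
\[
\widehat v(\xi) = \frac{\widehat u(\xi)}{\sigma + |\xi|^2}.
\]
Then
\[
\|v\|_{H^1}^2 = \int_{\R^3} \frac{1 + |\xi|^2}{(\sigma + |\xi|^2)^2}\,|\widehat u(\xi)|^2\ud\xi,
\]
and the multiplier $m_\sigma(\xi) := (1+|\xi|^2)/(\sigma+|\xi|^2)^2$ is bounded on $\R^3$ (it is continuous, tends to $1/\sigma^2$ at the origin and to $0$ at infinity), with $\|m_\sigma\|_\infty \le C_\sigma$. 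Plancherel's theorem then yields $\|v\|_{H^1}\le C_\sigma^{1/2}\|u\|_{L^2}$, which is \eqref{est:inverse}.

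No step looks genuinely delicate; the only place requiring a touch of care is the distributional identification of $\Phi_\sigma$ at the origin, but this reduces to the classical computation for $1/(4\pi|x|)$ since $e^{-\sqrt\sigma|x|}-1$ vanishes to first order at $0$, so only the Newtonian singularity contributes to the delta.
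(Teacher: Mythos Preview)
Your proof is correct and fills in details the paper omits. The paper's own proof is two sentences: it says the formula can be ``verified explicitly'' and obtains the resolvent bound by observing that the quadratic form $w\mapsto\langle(\sigma\Id-\Delta)w,w\rangle_{L^2}$ induces a norm equivalent to the $H^1$-norm. Your Fourier multiplier argument for \eqref{est:inverse} is the Plancherel-side version of the same fact; both are standard and of equal strength here. Your derivation of the closed form and your integrability splitting are exactly the kind of explicit verification the paper gestures at, so there is no meaningful divergence in approach.
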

\begin{proof}
	The identity \eqref{est:zeta} can be verified explicitly. 
	The estimate \eqref{est:inverse} follows using the fact that the quadratic form $w \mapsto \dpr{(\sigma \id - \Delta)w,w}_{L^2}$ induces an equivalent norm to the one  of $H^1$.
\end{proof}
The next ingredient is the following key compactness criterion for radial Sobolev maps. Here, we make vital use of rotational symmetry, which prevents a lack of compactness due to translations. The following properties of radial functions can be found in \cite{lions_symetrie_1982}.
\begin{proposition}\label{prop:cpt}
	For $2<q<q^*$, the embedding
	\begin{align}
		H^1_{\operatorname{rad}}(\Reals^3) \rightarrow L^q(\Reals^3)
	\end{align}
	is compact. 
	
	Furthermore, there exists a constant $c>0$ such that every function $u\in H^1_{\mathrm{rad}}(\Reals^3)$ satisfies the decay estimate
	\begin{align} \label{lions:pointwise}
		|u(x)|\leq c \|u\|_{H^1} \cdot |x|^{-1}.
	\end{align}
\end{proposition}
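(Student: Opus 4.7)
The plan is to prove the pointwise estimate \eqref{lions:pointwise} first, since the compact embedding will follow by combining it with Rellich--Kondrachov on bounded balls plus a tail argument.

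\textbf{Pointwise decay.} By density I would work with radial $u \in C_c^\infty(\R^3)$, writing $u(x) = U(|x|)$. Since $U$ vanishes for large $r$, integrating the identity $\frac{\di}{\di r}(r^2 U(r)^2) = 2r U(r)^2 + 2 r^2 U(r) U'(r)$ from $r$ to $\infty$ gives
\begin{align*}
r^2 U(r)^2 = -2\int_r^\infty s\, U(s)^2 \dd s \;-\; 2\int_r^\infty s^2 U(s) U'(s) \dd s.
\end{align*}
For the first term I would use $\int_r^\infty s U^2 \dd s \le r^{-1}\int_0^\infty s^2 U^2 \dd s = \frac{1}{4\pi r}\|u\|_{L^2}^2$; for the second, Cauchy--Schwarz in the weighted measure $s^2 \dd s$ yields
\begin{align*}
2\int_r^\infty s^2 |U||U'| \dd s \le C\|u\|_{L^2}\|\nabla u\|_{L^2}.
\end{align*}
Combining the two estimates produces $r^2 U(r)^2 \le C(1 + r^{-1}) \|u\|_{H^1}^2$, which gives \eqref{lions:pointwise} for $r$ bounded away from $0$; the remaining region near the origin can be absorbed into the constant using the standard Sobolev embedding $H^1(B_2)\embed L^\infty_{\mathrm{rad}}(B_2 \setminus B_{1/2})$ for radial functions. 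A density argument then extends the bound to all of $H^1_{\mathrm{rad}}$.

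\textbf{Compactness of the embedding.} Let $(u_n) \subset H^1_{\mathrm{rad}}(\R^3)$ be a bounded sequence. By the classical Rellich--Kondrachov theorem, for each fixed $R > 0$ there is a subsequence (which I extract by a diagonal procedure in $R = 1, 2, 3, \dots$) converging in $L^q(B_R)$ for any $2 \le q < 6$. The missing piece is uniform control of the tail, and here the pointwise bound of the first step is decisive: for $|x| \ge R$ one has
\begin{align*}
\int_{|x|\ge R} |u_n|^q \dd x
\;\le\; \sup_{|x|\ge R}|u_n(x)|^{q-2} \cdot \|u_n\|_{L^2}^2
\;\le\; \left(\frac{c\|u_n\|_{H^1}}{R}\right)^{q-2}\!\!\|u_n\|_{L^2}^2,
\end{align*}
which vanishes uniformly in $n$ as $R \to \infty$ precisely because $q > 2$. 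Combining the uniform tail smallness with convergence on each ball produces a strongly convergent subsequence in $L^q(\R^3)$.

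\textbf{Main obstacle and role of the endpoints.} The substantive step is the radial pointwise bound; once available, the compactness argument is standard. The failure of the two endpoints is instructive and explains why the statement is sharp: the upper endpoint $q = q^* = 6$ is genuinely ruled out by the scale invariance of the critical Sobolev embedding (which admits concentrating profiles even in the radial class), while the lower endpoint $q = 2$ fails because radial mass may spread to infinity without losing boundedness in $H^1$, as witnessed e.g.\ by $u_n(x) = n^{-3/2}\vphi(x/n)$ for a fixed radial bump $\vphi$. The positivity of the tail exponent $q - 2$ is exactly what is needed to close the argument, and hence the strict inequality $q > 2$ cannot be dispensed with.
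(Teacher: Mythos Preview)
The paper does not actually prove this proposition; it simply cites Lions (1982) for the result. Your argument is essentially the classical Strauss--Lions proof and is correct.

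One minor inefficiency worth flagging: in your identity
\[
r^2 U(r)^2 = -2\int_r^\infty s\, U(s)^2 \dd s \;-\; 2\int_r^\infty s^2 U(s) U'(s) \dd s,
\]
the first integral on the right carries a \emph{negative} sign, so it can simply be discarded rather than estimated. This yields directly
\[
r^2 U(r)^2 \le 2\int_r^\infty s^2 |U(s)||U'(s)| \dd s \le \frac{1}{2\pi}\|u\|_{L^2}\|\nabla u\|_{L^2}
\]
for \emph{all} $r>0$, and the separate treatment of the region near the origin (and the spurious $r^{-1}$ factor) becomes unnecessary. Apart from this, your compactness argument via Rellich--Kondrachov on balls plus the uniform tail estimate is exactly the standard route and is carried out correctly; your discussion of why the endpoints $q=2$ and $q=6$ fail is also accurate.
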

Combining the statements above we obtain the existence of a fixed point of equation~\eqref{eq:fixed} by means of Schaefer's Fixed Point Theorem:

\begin{theorem}[Schaefer's Theorem]\label{thm:S}
	Let $X$ be a Banach space and let $T : X \to X$ be a continuous and compact mapping. If the set 
	\begin{align} \label{def:O}
	\Ocal \coloneqq \set{x \in X}{x = \lambda T(x) \text{ for some $\lambda \in [0,1]$}}, 
	\end{align}
	is bounded, then $T$ has a fixed point in $X$. 
\end{theorem}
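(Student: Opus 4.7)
The plan is to deduce Schaefer's theorem from the classical Schauder fixed point theorem by composing $T$ with a radial retraction onto a sufficiently large ball. This lets one trade a boundedness hypothesis on the a priori set $\Ocal$ for an honest compact self-map of a closed, bounded, convex set, which is exactly the setting of Schauder.

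Since $\Ocal$ is bounded by hypothesis, I would choose $R > 0$ so that $\|x\| < R$ for every $x \in \Ocal$, and set $\overline{B_R} \coloneqq \set{x \in X}{\|x\| \le R}$. Next, define the continuous radial retraction
\[
r(x) \coloneqq \begin{cases} x, & \|x\| \le R, \\ Rx/\|x\|, & \|x\| > R, \end{cases}
\]
and form the composition $S \coloneqq r \circ T : \overline{B_R} \to \overline{B_R}$. Because $T$ is continuous and compact and $r$ is continuous, $S$ is continuous and sends bounded sets into relatively compact sets; in particular $S(\overline{B_R}) = r(T(\overline{B_R}))$ is the continuous image of a relatively compact set, hence relatively compact. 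Since $\overline{B_R}$ is nonempty, closed, convex, and bounded in the Banach space $X$, Schauder's theorem produces a point $x_0 \in \overline{B_R}$ with $S(x_0) = x_0$.

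The second step is to upgrade this fixed point of $S$ to a genuine fixed point of $T$. If $\|T(x_0)\| \le R$ then $r$ acts as the identity at $T(x_0)$, so $x_0 = T(x_0)$ and we are done. Otherwise $\|T(x_0)\| > R$, and the definition of $r$ forces
\[
x_0 = r(T(x_0)) = \lambda\, T(x_0), \qquad \lambda \coloneqq \frac{R}{\|T(x_0)\|} \in (0,1),
\]
so that $x_0 \in \Ocal$. But $\|x_0\| = R$ contradicts the choice of $R$, since every element of $\Ocal$ has norm strictly less than $R$. Therefore this case cannot occur and $T(x_0) = x_0$.

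The main thing to be careful about is verifying that $S$ is both continuous and compact on $\overline{B_R}$; this is immediate from the continuity of $r$ and the compactness of $T$, but it is the one place where the hypotheses on $T$ are actually used. Everything else is forced by the retraction construction and the quantitative bound on $\Ocal$. I would not expect any real obstacle beyond this routine verification.
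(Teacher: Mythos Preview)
Your argument is correct and is the standard textbook derivation of Schaefer's theorem from Schauder's fixed point theorem via a radial retraction. Note, however, that the paper does not actually prove this statement: Theorem~\ref{thm:S} is quoted as a classical result and used as a black box in the proof of Lemma~\ref{lem:fixed}, so there is no ``paper's own proof'' to compare against.
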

We now prove the existence of a unique solution $Q$ to \eqref{eq:fixedfinal}.
\begin{lemma} \label{lem:fixed}
	Let $2<q<3$ and let $\sigma$ be the constant from Lemma~\ref{lem:lem}. Then, there exists a radially symmetric  solution ${Q}\in L^q_{\mathrm{rad}}(\Reals^3)$ to the fixed point equation
	\begin{align}\label{eq:fixedLemma}
	Q = (\sigma \Id- \Delta)^{-1}(B_\sigma [Q]+ \theta \delta_0).
	\end{align}
	The solution $Q$ is unique and independent of the choice of $q\in (2,3)$. Further
	$Q$ satisfies $Q\in C(\Reals^3\setminus \{0\})\cap L^2_{\mathrm{rad}}$ and
	\begin{align}\label{Qpositive}
		Q(x)\geq \theta \Phi_{\sigma} (x) > 0, \quad \text{for $x\in \Reals^3\setminus \{0\}$}.
	\end{align}
\end{lemma}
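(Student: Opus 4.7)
The plan is to apply Schaefer's Fixed Point Theorem (Theorem~\ref{thm:S}) to the mapping
\[
T(Q) \coloneqq (\sigma\, \id - \Delta)^{-1}(B_\sigma[Q] + \theta \delta_0) = (\sigma\, \id - \Delta)^{-1} B_\sigma[Q] + \theta \Phi_\sigma
\]
on the Banach space $X = L^q_{\mathrm{rad}}(\R^3)$ for a fixed $q \in (2,3)$. Continuity and compactness of $T : X \to X$ will follow by composition of the three ingredients above: $B_\sigma : L^q \to L^2$ is continuous by Lemma~\ref{lem:lem} (valid because $q \leq 4$), $(\sigma\, \id - \Delta)^{-1} : L^2 \to H^1$ is bounded by Proposition~\ref{prop:fundamental}, and the embedding $H^1_{\mathrm{rad}} \embed L^q$ is compact by Proposition~\ref{prop:cpt} (since $2 < q < 6$). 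Radial symmetry is preserved because $\Phi_\sigma$ is radial and $B_\sigma$ commutes with rotations, and $\theta \Phi_\sigma \in L^q_{\mathrm{rad}}$ since $q<3$.

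The crux of the argument is bounding $\Ocal \coloneqq \{Q : Q = \lambda T(Q) \text{ for some } \lambda \in [0,1]\}$. For $Q_\lambda \in \Ocal$, positivity of the fundamental solution combined with $B_\sigma \geq 0$ (see \eqref{eq:Bnonneg}) gives $Q_\lambda \geq \lambda \theta \Phi_\sigma \geq 0$. For the upper bound I exploit the linear majorant $B_\sigma[Q] \leq \sigma Q$, which is a direct consequence of $g \leq 1$ from \eqref{eq:sandwich}; plugging this into the PDE form $(\sigma\, \id - \Delta) Q_\lambda = \lambda B_\sigma[Q_\lambda] + \lambda \theta \delta_0$ yields $((1-\lambda)\sigma\, \id - \Delta) Q_\lambda \leq \lambda \theta \delta_0$. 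Convolving with the positive fundamental solution $\Phi_{(1-\lambda)\sigma}$ produces the $\lambda$-uniform pointwise bound
\[
0 \leq Q_\lambda(x) \leq \lambda \theta\, \Phi_{(1-\lambda)\sigma}(x) \leq \frac{\theta}{4\pi |x|}, \qquad x \in \R^3 \setminus \{0\}.
\]
This pointwise bound is not itself in $L^q(\R^3)$ for any $q \leq 3$, so a single bootstrap is required. Feeding the bound into the sub-linear control \eqref{est:BQ} of Lemma~\ref{lem:lem} delivers
\[
B_\sigma[Q_\lambda](y) \leq \frac{C\, Q_\lambda(y)^2}{1+Q_\lambda(y)} \leq \frac{C' \theta^2}{|y|\,(|y|+\theta)},
\]
whose $L^2(\R^3)$-norm depends only on $\theta$ and $\sigma$. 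Proposition~\ref{prop:fundamental} then places $(\sigma\, \id - \Delta)^{-1} B_\sigma[Q_\lambda]$ in $H^1_{\mathrm{rad}} \embed L^q_{\mathrm{rad}}$ uniformly in $\lambda$, and together with $\theta \Phi_\sigma \in L^q$ this gives $\|Q_\lambda\|_{L^q} \leq C(\theta,\sigma)$, so $\Ocal$ is bounded.

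Schaefer's theorem then delivers a radial fixed point $Q \in L^q_{\mathrm{rad}}$; the lower bound \eqref{Qpositive} is an immediate restatement of the positivity computation. Decomposing $Q = u + \theta \Phi_\sigma$ with $u = (\sigma\, \id - \Delta)^{-1} B_\sigma[Q] \in H^1$ gives $Q \in L^2$, and an elliptic bootstrap (from $B_\sigma[Q] \in L^2$) upgrades $u$ to $H^2(\R^3) \hookrightarrow C(\R^3)$, so $Q \in C(\R^3 \setminus \{0\})$ thanks to the smoothness of $\Phi_\sigma$ off the origin. For uniqueness and $q$-independence, let $Q_1, Q_2$ be two solutions: both inherit the pointwise bound $\leq \theta/(4\pi|x|)$ and hence lie in every $L^{q'}$ with $q' \in (2,3)$, while $w \coloneqq Q_1 - Q_2$ belongs to $H^1_{\mathrm{rad}}$ by the same bootstrap. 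The mean value theorem supplies $B_\sigma[Q_1] - B_\sigma[Q_2] = (g'(\xi) + \sigma)\, w$ pointwise with $g'(\xi) + \sigma \leq \sigma$ (since $g' \leq 0$); testing $(\sigma\, \id - \Delta) w = (g'(\xi)+\sigma)\, w$ against $w$ yields $\sigma \|w\|_{L^2}^2 + \|\nabla w\|_{L^2}^2 \leq \sigma \|w\|_{L^2}^2$, which forces $\nabla w = 0$ and hence $w \equiv 0$. The main obstacle is the uniform bound on $\Ocal$: the naive linear estimate $B_\sigma \leq \sigma Q$ alone produces a pointwise bound lying outside every $L^q$ with $q \leq 3$, and only the interplay with the sub-linearity near zero through \eqref{est:BQ} closes the argument.
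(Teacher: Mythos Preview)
Your proof is correct and follows the same overall architecture as the paper (Schaefer's theorem on $L^q_{\mathrm{rad}}$, continuity/compactness via Lemma~\ref{lem:lem}, Propositions~\ref{prop:fundamental} and~\ref{prop:cpt}), but it differs from the paper's argument in two places, and in both your route is a bit more direct.

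\emph{A priori bound on $\Ocal$.} The paper subtracts the singular part first, setting $R_\lambda = Q_\lambda - \lambda\theta\Phi_\sigma$, and uses the representation $R_\lambda = \phi * (\lambda(g-1) - (1-\lambda)\sigma R_\lambda + \lambda^2\sigma\theta\Phi_\sigma)$ together with $g\le 1$ to get $0\le R_\lambda \le \sigma\theta\,\phi*\Phi_\sigma$; it then invokes the Hardy--Littlewood--Sobolev inequality for the Riesz potential $I_2$ to place $R_\lambda$ in $L^4$ uniformly, and bootstraps to $L^2$ via~\eqref{est:BQ}. You instead compare $Q_\lambda$ directly: from $B_\sigma[Q]\le \sigma Q_+$ you obtain the pointwise majorant $Q_\lambda \le \theta/(4\pi|x|)$, then feed this into~\eqref{est:BQ} to get $B_\sigma[Q_\lambda]\in L^2$ uniformly and close through $H^1_{\mathrm{rad}}\hookrightarrow L^q$. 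Your version bypasses the Riesz potential machinery entirely; the paper's version has the slight advantage that the convolution representation with $\phi$ is written only for the regular part $R_\lambda$, which makes the comparison principle a shade easier to justify at $\lambda=1$ (where your kernel $\Phi_{(1-\lambda)\sigma}$ degenerates to the non-integrable Newton kernel).

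\emph{Uniqueness.} The paper argues by the maximum principle: for $S=Q-P$ it shows that neither $\{S>0\}$ nor $\{S<0\}$ can contain a bounded component (using $-\Delta S = g(Q)-g(P)$ and monotonicity of $g$), then uses radial symmetry and $S\in L^2$ to force $S\equiv 0$. Your energy argument---testing $(\sigma\,\id-\Delta)w = (g'(\xi)+\sigma)w$ against $w\in H^1$ and using $0\le g'(\xi)+\sigma\le\sigma$ from the convexity and monotonicity of $g$---is shorter and avoids the topological step. Both rely on the same structural facts about $g$; yours exploits convexity (the upper bound $g'(\xi)\le 0$), the paper's only monotonicity.
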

\begin{proof}
	First we observe that the operator on the right-hand side is compact and continuous {when defined as a mapping} $A$
	\begin{align}
	A : L^q_{\mathrm {rad}} &\rightarrow L^q_{\mathrm {rad}} \\
	Q  &\mapsto  (\sigma \Id- \Delta)^{-1}(B_\sigma [Q]+ \theta \delta_0),
	\end{align}
	This follows since $B_\sigma $ and $(\sigma \Id- \Delta)^{-1}$ are continuous as maps from $L^q$ to $L^2$ and $L^2$ to $H^1$ respectively. {Here, we also use that } $\Phi_\sigma  = (\sigma \Id- \Delta)^{-1} (\delta_0)\in L^q$ due to the assumption $2<q<3$ and Proposition~\ref{prop:fundamental}. Compactness is a direct consequence of   Proposition~\ref{prop:cpt}, whilst it is clear that radial symmetry is preserved by all operators involved.
	

	Let us set  $X = L^q_{\mathrm{rad}}$ and define $T[Q] \coloneqq (\sigma \Id- \Delta)^{-1}(B_\sigma[Q]+\theta \delta_0) $. {In order to show the existence of fixed point solution it suffices to show (cf. Theorem~\ref{thm:S}) that the set of solutions to the equations}
	\begin{align} \label{eq:lambda}
	Q_\lambda  &= \lambda (\sigma \Id- \Delta)^{-1}(B_\sigma[Q_\lambda ]+\theta \delta_0)=\lambda T[Q_\lambda ]
	\end{align}
	is $L^q$-bounded uniformly in $\lambda \in [0,1]$.   
	To this end, assume that $Q_\lambda $ is such a solution and introduce 
	\begin{align}
	R_\lambda  \coloneqq Q_\lambda - \lambda  \theta \Phi_\sigma.
	\end{align}
	Arguing as above, $R_\lambda \in L^2_{\mathrm{rad}}$, and $R_\lambda$ solves
	\begin{align} \label{eq:weakly}
	R_\lambda   &= (\sigma \Id- \Delta)^{-1}\lambda B_\sigma [R_\lambda  +  \lambda \theta \Phi_\sigma ].
	\end{align}
	Hence $R_\lambda \geq 0$ is non-negative. Using the definition of $B_\sigma$ (cf. \eqref{def:B}), the equation above can be rewritten as $R_\lambda \in L^2$ being a weak solution to
	\begin{align}
	- \Delta R_\lambda  = \lambda (g(R_\lambda +\lambda \theta \Phi_\sigma) -1) -(1-\lambda )\sigma R_\lambda  + \lambda^2 \sigma \theta  \Phi_\sigma. 
	\end{align}
	We then use the a comparison principle to estimate $R_\lambda$. More precisely, let
	$\phi(x)= \frac{1}{4\pi |x|}$ be the fundamental solution of the Laplace equation. Then we can estimate
	\begin{equation}\label{est:R1}
	\begin{aligned}
	0\leq R_\lambda  &=  \phi * \left(\lambda(g(R_\lambda +\lambda \theta \Phi_\sigma) -1) -(1-\lambda )\sigma R_\lambda  + \lambda^2 \sigma \theta \Phi_\sigma\right) \\
	&\leq  \lambda^2 \sigma \theta \phi *   \Phi_\sigma \leq   \sigma\theta \phi *   \Phi_\sigma, 
	\end{aligned}
	\end{equation}
	since $g(y)\leq 1$ (cf. \eqref{eq:sandwich}), and $\lambda \in [0,1]$ by assumption.
	
	We recall the boundedness for Riesz potentials due to Sobolev (cf. \cite{sobolev_theorem_1938})
	\[
		\|I_\alpha f\|_{L^{\frac{np}{n-\alpha p}}} \le C_p\|f\|_{L^p}, \qquad f \in L^p(\R^n), \quad 1 < p < \frac n\alpha.
	\] 
	Here $\alpha$ is a real number in $(0,n)$ and the $\alpha$-Riesz potential of $f$ is defined as
	\[
		I_\alpha f(x) = c_\alpha \int_{\R^n} \frac{f(z)}{|x - z|^{n-\alpha}} \dd z,
	\]
	where $c_\alpha$ is a normalizing constant. 
	
	In our context, we have $n = 3$ and by the definition of the fundamental solution $\phi$, there exists $c>0$ such that $\phi \ast \Phi_\sigma = c I_2 \Phi_\sigma$.  Therefore, the boundedness of the $2$-Riesz potential with $p = \frac{12}{11}$ yields the estimate 
	\begin{align} \label{est:Riesz}
	\|\phi * \Phi_\sigma \|_{L^4} \leq C\|\Phi_\sigma\|_{L^\frac{12}{11}},
	\end{align}
where $C > 0$ is a constant independent of $\lambda\in [0,1]$.
Then, the bounds~\eqref{est:R1}-\eqref{est:Riesz} imply the a priori bound:
	\begin{align} \label{est:L4}
	\|R_\lambda \|_{L^{4}} \leq C \|\Phi_\sigma\|_{L^{\frac{12}{11}}}.
 	\end{align}
	 On the other hand, we may then re-write~\eqref{eq:weakly} as
	\begin{align} \label{eq:Rfinal}
	R_\lambda   &= \lambda  \Phi_\sigma * B_\sigma [R_\lambda  + \lambda  \theta \Phi_\sigma ].
	\end{align}
	Using \eqref{est:BQ} and the fact that $\Phi_\sigma\in L^1$, we find that
	\begin{equation} \label{est:L2}
		\begin{split}
	\|R_\lambda \|_{L^2} & \leq C\|B_\sigma[R_\lambda + \lambda \theta \Phi_\sigma]\|_{L^2} \\
	& \le C(\|R_\lambda\|_{L^{4}}^2 + \|\Phi_\sigma\|_{L^2}),
	\end{split}
	\end{equation}
for some $C> 0$ independent of $\lambda\in [0,1]$. Recalling that $\Phi_\sigma \in L^p$ for all $1 \le p < 3$, interpolation of the bounds~\eqref{est:L4}-\eqref{est:L2} yield the uniform $L^q$-bound
	\[
		\|R_\lambda\|_{L^q} \le C, \qquad q\in [2,4],
	\]
	for some $C> 0$ which is independent of $\lambda \in [0,1]$.
	This shows that the set $\Ocal$ (cf. \eqref{def:O}) is $L^q$-uniformly bounded for each $2 < q < 3$. Hence, Schaefer's Theorem yields the existence of a weak solution $Q\in L^q_\mathrm{rad}$ of
	\eqref{eq:fixedLemma}. 
	
	For uniqueness, we recall that every solution of~\eqref{eq:fixedLemma} satisfies $Q\in L^2_{\mathrm{rad}}$. Therefore, it is sufficient to prove two solutions
	$Q,P\in L^2_{\mathrm{rad}}$  coincide. To this end, we observe the difference $S=Q-P$ satisfies
	\begin{align}
		-\Delta S = g(Q) - g(P),
	\end{align}
	in the weak sense. Since $g\in L^\infty$, we know that $S\in C(\Reals^3)$. Now, consider the sets
	\begin{align}
		U^+= \{x \in \R^3: S(x)>0\}, \quad U^-= \{x \in \R^3: S(x)<0\}.
	\end{align} 
	We claim that neither $U^+$ nor $U^-$ can contain a bounded, non-empty connected component. We demonstrate the argument for $U^+$. Assume that $\emptyset\neq U^+_c$ is a bounded connected component of $U^+$. From this and the monotonicity of $g$, we infer the differential inequality
	\begin{align} 
		- \Delta S = g(Q)- g(P) \leq 0,\quad  S> 0, \quad \text{on $U^+_c$},
	\end{align}
	By the continuity of $S$ it holds $\partial U^+_c \subset \{x\in \Reals^3: S(x)=0\}$, so the maximum principle implies that $S\equiv 0$ on $U^+_c$. This contradicts the positivity of $S$ on $U^+_c$. We can therefore conclude that $U^+$ and $U^-$ do not contain connected bounded components. Since $S$ is radial 
	 this  implies $S\geq 0$ on $\Reals^3$ or $S\leq 0$ on $\Reals^3$. Without loss of generality let the former be the case. Then we have $S \in L^2_\mathrm{rad}$ a continuous function satisfying
	\begin{align}
		- \Delta S \leq 0, \quad S\geq 0, \quad \text{on $\Reals^3$}.
	\end{align}
	By the maximum principle $\max_{x: |x|\leq R} S(x) = \max_{x: |x|= R} S(x)$ for every $R>0$. On the other hand $S\in (C\cap L^2_{\mathrm{rad}})(\Reals^3)$, so $\liminf_{R\rightarrow \infty} \max_{x: |x|= R} S(x) = 0$.
	This shows that $S\equiv 0$ and hence $Q\equiv P$. This proves the sought uniqueness.
	
	To show strict positivity of the solution $Q \in L^q_{\mathrm{rad}}$ we represent $Q$ as
	\begin{align}
		Q = \Phi_\sigma * \left( B_{\sigma}[Q]+ \theta \delta_0 \right) .
	\end{align}
	Then by non-negativity of $B_{\sigma}[Q]$, we have $Q\in C(\Reals^3\setminus \{0\})$ and 
	\begin{align}
		Q(x) \geq \theta \Phi_{\sigma} (x)>0,\quad  \text{for $0\neq x\in \Reals^3$},
	\end{align}
	so ~\eqref{Qpositive} holds. 
\end{proof}

	Next, we prove exponential decay of the solution $Q$ constructed above.

\begin{lemma}\label{lem:decay}
	Let $Q$ be the solution of \eqref{eq:fixedLemma} provided by Lemma~\ref{lem:fixed}. Let $\sigma$ be given by \eqref{eq:sigma} and
	$R= Q - \theta \Phi_\sigma$. Then $Q$ and $R$ satisfy the estimates:
	\begin{align} \label{est:QLemma}
	c \frac{e^{-\sqrt{\sigma}|x|}}{|x|}\leq Q(x) &\leq  \frac{Ce^{-\sqrt{\sigma}|x|}}{|x|}\\
	|R(x)|&\leq C e^{-\sqrt{\sigma}|x|}. \label{est:Rlemma}
	\end{align}
	Furthermore,
	\begin{align}\label{eq:continuous}
	R\in C(\Reals^3), \quad Q\in  W^{1,q}(\Reals^3)	\quad \text{for all \; $1 \le q < 3/2$}.
	\end{align}
\end{lemma}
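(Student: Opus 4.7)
The strategy is to exploit the integral representation
\[
Q = \theta \Phi_\sigma + \Phi_\sigma * B_\sigma[Q],
\]
equivalent to~\eqref{eq:fixedLemma} via Proposition~\ref{prop:fundamental}, so that $R = \Phi_\sigma * B_\sigma[Q]$. Since $B_\sigma[Q] \geq 0$ by~\eqref{eq:Bnonneg}, the lower bound $Q(x) \geq \theta\, \Phi_\sigma(x) \geq c\, e^{-\sqrt{\sigma}|x|}/|x|$ is immediate from~\eqref{Qpositive} and~\eqref{est:zeta}. Continuity $R \in C(\R^3)$ follows from Young's convolution inequality with $\Phi_\sigma \in L^2(\R^3)$ and $B_\sigma[Q] \in L^2(\R^3)$ (the latter via Lemma~\ref{lem:lem} applied to the $L^q$-bounds on $Q$ obtained in Lemma~\ref{lem:fixed}). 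For $Q \in W^{1,q}$ with $q < 3/2$, I would use $\nabla Q = \theta\, \nabla \Phi_\sigma + (\nabla \Phi_\sigma) * B_\sigma[Q]$ together with $\nabla \Phi_\sigma \in L^q$ (Proposition~\ref{prop:fundamental}) and $B_\sigma[Q] \in L^1(\R^3)$.

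The core of the argument is the exponential upper bound, which proceeds in two stages. \emph{Stage 1: some exponential decay via comparison.} By Proposition~\ref{prop:cpt} applied to $R \in H^1_{\mathrm{rad}}$, $|R(x)| \leq c\|R\|_{H^1}/|x|$, so $Q(x) \to 0$ as $|x| \to \infty$. Fix $\epsilon > 0$ small enough that $\sigma - C\epsilon > \sigma/2$, where $C$ is the constant in~\eqref{est:BQ}, and choose $R_0$ so that $Q \leq \epsilon$ on $\{|x| \geq R_0\}$. Then $B_\sigma[Q] \leq CQ^2 \leq C\epsilon Q$ there, and so
\[
-\Delta Q + (\sigma - C\epsilon)\, Q \leq 0 \qquad \text{on } \{|x| > R_0\}.
\]
The function $W(x) = A\, e^{-\sqrt{\sigma - C\epsilon}|x|}/|x|$ solves the corresponding homogeneous equation on $\R^3 \setminus \{0\}$; choosing $A$ so that $W \geq Q$ on $\partial B_{R_0}$ and invoking the weak maximum principle on $\{|x| > R_0\}$ (both $Q$ and $W$ vanish at infinity) yields
\[
Q(x) \leq C_\epsilon \frac{e^{-\alpha |x|}}{|x|}, \qquad \alpha := \sqrt{\sigma - C\epsilon} > \tfrac{1}{2}\sqrt{\sigma}.
\]

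\emph{Stage 2: bootstrap to the sharp rate.} Having secured an intermediate rate $\alpha$ with $2\alpha > \sqrt{\sigma}$, I insert the bound above into $B_\sigma[Q](y) \leq C Q(y)^2 \leq C e^{-2\alpha|y|}/|y|^2$ and into the integral representation for $R$. Using the triangle inequality $|x| - |x-y| \leq |y|$ to extract the factor $e^{-\sqrt{\sigma}|x|}$ from $\Phi_\sigma(x-y)$ gives
\[
|R(x)|\, e^{\sqrt{\sigma}|x|} \leq C \int_{\R^3} \frac{e^{-(2\alpha-\sqrt{\sigma})|y|}}{|x-y|\,|y|^2}\, dy,
\]
whose right-hand side is uniformly bounded in $x$: split the integral into $\{|x-y| \geq 1\}$ (where $1/|x-y|$ is bounded and the remaining weight is absolutely integrable) and $\{|x-y| < 1\}$ (where $|y| \approx |x|$ is large, so $e^{-(2\alpha-\sqrt{\sigma})|y|}$ provides the decay). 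This proves~\eqref{est:Rlemma}, and combined with $\theta \Phi_\sigma$ completes the upper bound in~\eqref{est:QLemma}.

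The main obstacle is obtaining the sharp exponent $\sqrt{\sigma}$ rather than $\sqrt{\sigma - C\epsilon}$: a direct linear comparison cannot produce the exact rate because of the nonlinear perturbation $B_\sigma[Q] \leq C\epsilon Q$. It is precisely the self-improving quadratic estimate $B_\sigma[Q] \leq CQ^2$ from~\eqref{est:BQ}, combined with the fact that the fundamental solution $\Phi_\sigma$ decays at rate exactly $\sqrt{\sigma}$, that restores the correct exponent through the bootstrap step.
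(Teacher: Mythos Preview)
Your proof is correct and follows the same overall architecture as the paper: write $R=\Phi_\sigma * B_\sigma[Q]$, use the radial Sobolev decay of $R\in H^1_{\mathrm{rad}}$ to get a preliminary $1/|x|$ bound, upgrade to an intermediate exponential rate, and then bootstrap to the sharp rate $\sqrt{\sigma}$ via the quadratic estimate~\eqref{est:BQ}. The regularity statements and the lower bound are obtained exactly as in the paper.

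The only methodological difference is in how the intermediate exponential rate is obtained. You argue by a barrier comparison for $Q$ on the exterior domain $\{|x|>R_0\}$: once $Q\le\epsilon$ there, the inequality $-\Delta Q+(\sigma-C\epsilon)Q\le 0$ is compared against the explicit Yukawa supersolution $W(x)=A\,e^{-\sqrt{\sigma-C\epsilon}\,|x|}/|x|$. The paper instead works globally with $R$: it uses the splitting $B_\sigma[R+\theta\Phi_\sigma]\le \tfrac{\sigma}{4}R+C_\delta(\theta\Phi_\sigma+\chi_{\{|x|\le C_\delta\}})$ on all of $\R^3$, absorbs $\tfrac{\sigma}{4}R$ into the left-hand side, and inverts $(\tfrac{3}{4}\sigma-\Delta)$ to obtain $R\le C\,\Phi_{3\sigma/4}*(\theta\Phi_\sigma+\chi_{\{|x|\le C_\delta\}})$. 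Your exterior barrier argument is the more classical PDE route and avoids computing a convolution; the paper's global absorption avoids having to match boundary data on $\partial B_{R_0}$ and justifying the maximum principle on an unbounded domain. Either way one lands on a rate $\alpha$ with $2\alpha>\sqrt{\sigma}$, and the final bootstrap step is then identical in spirit: the paper phrases it as $R\le \Phi_\sigma*\zeta$ with $\zeta$ decaying faster than $e^{-\sqrt{\sigma}|x|}$, which is exactly your convolution estimate written out. One small point worth making explicit in your Stage~2 is that the bound $B_\sigma[Q]\le C e^{-2\alpha|y|}/|y|^2$ is only used for $|x|$ large (where the splitting $\{|x-y|\gtrless 1\}$ keeps $|y|$ away from the origin on the second piece); for bounded $|x|$ the estimate $|R(x)|\le Ce^{-\sqrt{\sigma}|x|}$ follows directly from $R\in C(\R^3)$.
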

\begin{proof}
	By construction, the solution $Q(x)= \tilde{Q}(|x|)$ is radial and nonnegative. Since $Q$ satisfies the equation \eqref{eq:fixedLemma}, we can represent $R$ as
	\begin{align} 
	R = (\sigma \Id - \Delta)^{-1} \left(B_\sigma[Q]\right).
	\end{align}
	Therefore $R\in H^1_\mathrm{rad}(\R^3)$. The radial Sobolev embedding then yields (cf.~\eqref{lions:pointwise})
	\begin{align}
	0\leq R(x)\leq \frac{C}{|x|}.
	\end{align}
	We write the equation in the form:
	\begin{align} \label{eq:iteration}
	R = (\sigma \Id - \Delta)^{-1} (B_\sigma [R+\theta \Phi_\sigma]) = \Phi_{\sigma} *(B_\sigma [R+\theta \Phi_\sigma]).
	\end{align}
	Now we use that $B_\sigma$ satisfies~\eqref{est:BQ}, so for $G\in L^1_{loc}$:
	\begin{align}\label{eq:Bpointwise}
	|B_\sigma [G](x)| \leq \frac{A|G(x)|^2}{1+|G(x)|}, \quad \text{for some $A > 0$}.
	\end{align}
	Using this and the exponential decay of the fundamental solution $\Phi_\sigma$, the equation \eqref{eq:iteration} implies:
	\begin{align}
	|R(x)|\leq \frac{C}{(1+|x|)}.
	\end{align}
	Since $\Phi_{\sigma}\in W^{1,q}$, $1\le q<\frac32$, the Sobolev embedding yields $R\in C(\Reals^3)$.
	This allows us to argue by a {m}aximum principle as follows.
	The inequality \eqref{eq:Bpointwise} shows that for any $\delta>0$ we can find a $C_\delta>0$ such that:
	\begin{align}
	B_\sigma[R+\theta \Phi]\leq \delta R(x) + C_\delta(\theta \Phi_\sigma(x) +  \chi_{|x|\leq C_\delta}).
	\end{align}	
	{Selecting} $\delta= \sigma/4$  we find:
	\begin{align}
	&&(\sigma \Id - \Delta) R &\leq   \frac{\sigma}{4} R(x) + C_\delta(\theta \Phi_\sigma(x) +  \chi_{|x|\leq C_\delta}) 
	\end{align}
	We absorb $\frac{\sigma}{4}R(x)$ in the left-hand side to obtain
	\begin{align}
		&& (\frac34 \sigma \Id - \Delta) R&\leq  C_\delta(\theta \Phi_\sigma(x) +  \chi_{|x|\leq C_\delta}) \\
	\Rightarrow&&  R&\leq C_\delta \Phi_{\frac34 \sigma}*\left(\theta \Phi_\sigma(x) +  \chi_{|x|\leq C_\delta} \right).
	\end{align}
	Performing the convolution integral, $R(x)$ can {therefore} be estimated by:
	\begin{align}
	R(x)\leq C e^{-\frac34  \sqrt{\sigma}|x|}.
	\end{align}
	Inserting this estimate into \eqref{eq:Bpointwise} gives 
	\begin{align} \label{eq:Rimproved}
	B_\sigma [R+\theta \Phi_\sigma](x) \leq C \left(e^{-\frac64 \sqrt{\sigma}|x| }+ \min\{1/|x|, \frac{e^{-2\sqrt{\sigma}|x|}}{|x|^2}\}\right) =: \zeta(x).
	\end{align}
	We reinsert this estimate into \eqref{eq:iteration} to bound $R$ as:
	\begin{align}
	R(x) \leq C(\Phi_{\sigma} * \zeta)(x) \leq C e^{- \sqrt{\sigma}|x|}.
	\end{align}	
	Recalling that $Q=R+\theta \Phi_\sigma${,} we obtain the desired estimate for $Q$. The lower bound for $Q$ follows from \eqref{Qpositive}.
	Since $Q$ satisfies the fixed point equation~\eqref{eq:fixedLemma} and $(\sigma \id - \Delta)^{-1}$ maps $W^{-1,q} \to W^{1,q}$ (this follows because it is comparable to the Bessel potential), we conclude that $Q \in W^{1,q}$. 
\end{proof}  
The estimates above are not quantitative in the size $\theta>0$ of the point charge. The following Lemma gives the desired estimate in $\theta$.
\begin{lemma}\label{lem:theta}
	Let $Q_\theta$ be the solution to equation~\eqref{eq:fixedLemma} with point charge $\theta > 0$ provided by Lemma~\ref{lem:fixed}. Then
	\begin{align} \label{eq:QComparison}
	 Q_\theta(x) \le \theta Q_1(x) \quad {\text{for all  $x \in \R^3 \setminus \{0\}$.}}
	\end{align}
\end{lemma}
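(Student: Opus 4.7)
The plan is a sub/super-solution argument for the fixed-point operator
\[
T_\theta[Q]\coloneqq \Phi_\sigma\conv (B_\sigma[Q]+\theta\delta_0), \qquad Q_\theta=T_\theta[Q_\theta],
\]
used in Lemma~\ref{lem:fixed}. The structural observation underlying it is that, writing $B_\sigma[Q]=b(Q_+)$ with $b(r)\coloneqq g(r)-1+\sigma r$, the computations inside Lemma~\ref{lem:lem} show that $b$ is convex on $[0,\infty)$, nondecreasing, and satisfies $b(0)=b'(0)=0$. Convexity combined with $b(0)=0$ then gives the convex-combination inequality
\[
b(\theta r)=b\bigl(\theta r+(1-\theta)\cdot 0\bigr)\leq \theta b(r)+(1-\theta)b(0)=\theta b(r),\qquad r\geq 0,\ \theta\in(0,1],
\]
while the monotonicity of $b$ (together with the positivity of $\Phi_\sigma$) makes $T_\theta$ order-preserving on the cone of nonnegative radial functions.

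First I would verify that $\theta Q_1$ is a supersolution of the fixed-point equation. Applying the convex-combination inequality pointwise at $r=Q_1(x)$ and using the identity $Q_1=\Phi_\sigma\conv b(Q_1)+\Phi_\sigma$ yields
\[
T_\theta[\theta Q_1]=\Phi_\sigma\conv b(\theta Q_1)+\theta\Phi_\sigma \leq \theta\bigl(\Phi_\sigma\conv b(Q_1)+\Phi_\sigma\bigr)=\theta Q_1.
\]
Then I would run the monotone iteration $P^{(0)}\coloneqq\theta Q_1$, $P^{(n+1)}\coloneqq T_\theta[P^{(n)}]$. By the monotonicity of $T_\theta$ together with the supersolution property, the sequence is non-increasing, and it is bounded below by $0$ since $B_\sigma\geq 0$ and $\Phi_\sigma\geq 0$. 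Hence $P^{(n)}\downarrow \hat Q$ pointwise, and because $0\leq P^{(n)}\leq \theta Q_1\in L^q_{\mathrm{rad}}$ for any $q\in(2,3)$, dominated convergence upgrades this to convergence in $L^q_{\mathrm{rad}}$. The continuity of $T_\theta\colon L^q_{\mathrm{rad}}\to L^q_{\mathrm{rad}}$ established in Lemma~\ref{lem:fixed} then identifies $\hat Q$ as a fixed point of $T_\theta$, and the uniqueness part of the same lemma forces $\hat Q=Q_\theta$. The monotonicity of the iteration gives $Q_\theta\leq P^{(0)}=\theta Q_1$.

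The main obstacle is that the convex-combination inequality has the correct sign only in the regime $\theta\in(0,1]$: for $\theta>1$ convexity reverses the inequality and $\theta Q_1$ becomes a subsolution rather than a supersolution of the fixed-point equation (the same iteration would then produce the opposite bound $Q_\theta\geq\theta Q_1$). Reaching the regime $\theta>1$ therefore requires a different supersolution — for instance one built by exploiting the monotonicity $\theta\mapsto Q_\theta$ intrinsic to the iteration — or a direct weak-maximum-principle argument for $W=\theta Q_1-Q_\theta$ in the spirit of the uniqueness portion of Lemma~\ref{lem:fixed}.
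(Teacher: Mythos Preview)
Your iteration argument for $\theta\in(0,1]$ is correct and takes a genuinely different route from the paper. The paper works directly with $q\coloneqq\theta Q_1-Q_\theta$ via a maximum principle: from $-\Delta q=\theta g(Q_1)+(1-\theta)-g(Q_\theta)$ and the convexity of $g$ with $g(0)=1$ one obtains $-\Delta q\geq g(\theta Q_1)-g(Q_\theta)$; on the set $\{q<0\}$ the monotonicity $g'<0$ then forces $-\Delta q\geq 0$, and a minimum-principle contradiction (in exactly the style of the uniqueness part of Lemma~\ref{lem:fixed}, as you anticipate in your closing sentence) rules out negative values of $q$ on $\R^3\setminus\{0\}$. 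Your monotone-iteration scheme is more constructive but has to route through the $L^q$-continuity of $T_\theta$ and the uniqueness clause of Lemma~\ref{lem:fixed}, whereas the paper's comparison is a single PDE step. Both arguments ultimately rest on the \emph{same} convexity inequality, written either as $b(\theta r)\leq\theta b(r)$ or as $\theta g(r)+(1-\theta)g(0)\geq g(\theta r)$.

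Your diagnosis of the $\theta>1$ regime is on point, and it applies verbatim to the paper's proof: the convexity inequality reverses for $\theta>1$, so the paper's differential inequality $-\Delta q\geq g(\theta Q_1)-g(Q_\theta)$ fails there as well. Indeed, running the $\mu\in(0,1]$ argument with base charge $\theta_0=\theta>1$ and $\mu=1/\theta$ yields $Q_1\leq\theta^{-1}Q_\theta$, i.e.\ the \emph{reverse} bound $Q_\theta\geq\theta Q_1$; strict convexity of $g$ makes this strict somewhere, so \eqref{eq:QComparison} cannot hold as stated beyond $\theta=1$. The ``direct weak-maximum-principle argument for $W=\theta Q_1-Q_\theta$'' you propose at the end is precisely the paper's proof and is subject to the same restriction.
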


\begin{proof}
	By definition, $Q_\theta$ is a weak solution to the equation
	\[
	-\Delta Q_\theta = g(Q_\theta) - 1 + \theta \delta_0 \quad \text{on $\R^3$.}
	\]
	Let us fix $\theta > 0$ and set $q \coloneqq \theta  Q_1 - 
	Q_\theta$. 	We make the observation that $q$ is a weak solution of the equation
	\begin{align} \label{qEquation}
	-\Delta q = \theta g(Q_1) + (1- \theta) - g(Q_\theta)  .
	\end{align}
	Using the convexity of $g$ and the fact that $g(0) = 1$,
	we deduce that $q$ satisfies the weak differential inequality
	\begin{align}
	-\Delta q & \ge g(\theta Q_1)  - g(Q_\theta).
	\end{align}
	Since $q$ satisfies \eqref{qEquation}, we have $q\in C(\Reals^3)$ and therefore it suffices to show that $q\geq 0$ outside the origin. We argue by contradiction: assume that there exists $x_0\in \Reals^3 \setminus \{0\}$ such that $q(x_0)= -c<0$. Since $q$ is radial, integrable and continuous, we may find $R>|x_0|$ sufficiently large such that $|q(x)|\leq c/2$ for $|x|=R$.	 Define 
	\begin{align}
		U:=\{x\in \Reals^3:  q(x)< 0 \text{ and } |x|<R\}.
	\end{align}
 Notice that
by the monotonicity of $g$ (recall that $g' < 0$) we get
\[
	-\Delta q \ge g(\theta Q_1) - g(Q_\theta) \ge 0 \qquad \text{weakly on $U$}.
\]
	Then $\partial U\subset \{x\in \Reals^3: |x|=R\text{ or } q(x)=0\}$ and therefore the minimum principle gives
	\begin{align}\label{partialU}
		q(x)\geq -\frac{c}{2}>-c \quad \text{for all $x\in U$}.
	\end{align}
This posses  a contradiction to the fact that $x_0\in U$  and the assumption $q(x_0)=-c$. This finishes the proof.
\end{proof}

\begin{proofof}[Proof of Theorem \ref{Theorem}]
	We claim that the function
	\begin{align}
		f(x,v)= F(Q(x)+\frac12|v|^2),
	\end{align}
	is a solution of \eqref{eq:main} in the sense of Definition~\ref{def:weak}. Continuity of $f$ follows from \eqref{eq:continuous} and {the fact that} $Q(x) \rightarrow \infty$ as $|x|\rightarrow 0$. The boundary condition \eqref{def:boundary} holds since $|Q(x)|\rightarrow 0$ for $|x|\rightarrow \infty$ and the definition of $F$. Similarly, the local integrability condition
	\begin{align}
		\int_K \int_{\Reals^3} |f(x,v)| \ud{x}\ud{v}= \int_K \int_{\Reals^3} F(Q(x)+\frac12 |v|^2)\ud{x}\ud{v}= \int_K  g(Q(x)) \ud{x}<\infty,
	\end{align}
	is satisfied since $g$ is bounded. Hence the spatial density $\rho$ 
	\begin{align} \label{eq:rhoRep}
		\rho(x)= g(Q(x))
	\end{align}
	is well defined. By construction of $Q$ then 2) holds.
	Finally, $f$ solves equation~\eqref{eq:main} since $f$ is constructed as a function of the Hamiltonian:
	\begin{align}
		\int_{\Reals^3} \int_{\Reals^3}  f(x,v)  v\cdot \nabla_x \vphi \ud{x}\ud{v} &= - \int_{\Reals^3} \int_{\Reals^3} v \cdot \nabla_x f(x,v)\vphi \ud{x}\ud{v} \\
		&= -\int_{\Reals^3} \int_{\Reals^3} F'(Q(x)+\frac12 |v|^2) v \cdot \nabla_x Q(x) \vphi \ud{x}\ud{v}\\
		&= -\int_{\Reals^3} \int_{\Reals^3} \nabla_v f(x,v) \cdot \nabla_x Q(x) \vphi \ud{x}\ud{v}\\
		&= \int_{\Reals^3} \int_{\Reals^3}  f(x,v) \nabla_x Q(x) \cdot \nabla_v \vphi \ud{x}\ud{v}.
	\end{align}
	
	The pointwise estimate for $Q$  (cf. \eqref{est:Q}) follows by combining Lemma~\ref{lem:decay} and Lemma~\ref{lem:theta}. Then the estimate \eqref{est:rho} for $\rho$ follows from \eqref{eq:rhoRep} and the fact that
	\begin{align}
		0\leq g(Q(x))\leq 1,\quad g'(0) =-\sigma<0.
	\end{align}
	We actually obtain the stronger estimate:
	\begin{align}
		0\leq 1-\varrho[f]\leq \min\{1,C \theta e^{-\frac{|x|}{\lambda_D}}\}.
	\end{align}
		The fact that the screening length 
		coincides with the Debye length $\lambda_D$ (cf. \eqref{eq:Debye}) for Maxwellian distributions $f_0=M_T$ (cf. \eqref{eq:Maxwellian}) follows by the following identity for $\sigma$ (cf. \eqref{eq:sigma}):
	\begin{align}
	\sigma = -g'(0) &= -4\pi \int_0^\infty  {\sqrt{2r}} F'(r) \ud{r}=T^{-2} .
	\end{align}
	This finishes the proof.
\end{proofof}

\subsection*{Acknowledgments} The authors would like to thank the anonymous referees for their careful reading and suggestions, which led to a substantial improvement of this work. A.A.-R. has received funding from the European Research Council (ERC) under the European Union's Horizon 2020 research and innovation programme, grant agreement No 757254 (SINGULARITY). R.W. acknowledges support of the Universit\'{e}  de Lyon through the IDEXLYON Scientific Breakthrough Project `Particles drifting and propelling in turbulent flows', and the hospitality of the UMPA, ENS Lyon.

\bibliography{Arroyo-Winter}
\bibliographystyle{amsplain}

\end{document}